\documentclass[a5paper, 10pt]{article}

\usepackage{cite, amsmath, amssymb, booktabs, lastpage, graphicx}
\usepackage[hidelinks]{hyperref}

\usepackage{geometry}
\usepackage{setspace}
 \usepackage{cite, amsmath, amssymb, amsthm}
    \usepackage[mathscr]{euscript}
    \usepackage{graphicx}
    \usepackage[margin=1cm,%
                font=small,%
                format=hang,%
                labelsep=period,%
                labelfont=bf]{caption}

\usepackage{amsthm}

\theoremstyle{plain}
\newtheorem{lemma}{Lemma}
\newtheorem{theorem}{Theorem}

\newtheorem{proposition}[theorem]{Proposition}

\theoremstyle{remark}

\theoremstyle{definition}

\usepackage{graphicx}
\makeatletter
\renewcommand{\maketitle}{
        \begin{center}

                {\Large\bfseries \@title} \par
                \vspace{5mm}
                \baselineskip=0.2in
                {\large\bfseries \@author}\par
                \vspace{1mm}
                {\it \@address} \par
                {\small\tt \@email} \par
                \vspace{3mm}
                
        \end{center}
        \vspace{3mm}
}
\newcommand{\address}[1]{\def\@address{#1}}
\newcommand{\email}[1]{\def\@email{#1}}

\makeatother

\usepackage{fancyhdr}

\usepackage[margin=1cm,%
                        font=footnotesize,%
                        format=hang,%
                        labelsep=period,%
                        labelfont=bf]{caption}

\title{Some New Results on Seidel Equienergetic Graphs}
\author{Samir K. Vaidya$^{a}$, 
        Kalpesh M. Popat$^b$}

\address{$^a$ Department of Mathematics, Saurashtra University, Rajkot, India\\
        $^b$ Department of Mathematics, Saurashtra University, Rajkot, India
    }

\email{samirkvaidya@yahoo.co.in, kalpeshmpopat@gmail.com}

\newcommand{\boxtensor}{{\Box\keCahitrn-9.03pt\raise1.42pt\hbox{$\times$}}}

\newcommand{\be}{\begin{eqnarray}}
\newcommand{\ee}{\end{eqnarray}}

\begin{document}

\maketitle

\begin{abstract}
The energy of a graph $G$ is the sum of the absolute values of the eigenvalues of the adjacency matrix of $G$. Some variants of energy can also be found in the literature which are defined on the concepts of Laplacian matrix, Distance matrix, Common-neighbourhood matrix and Seidel matrix. The Seidel matrix of the graph $G$ is the square matrix in which $ij^{th}$ entry is $-1$ or $1$, if the vertices $v_i$ and $v_j$ are adjacent or non-adjacent respectively, and is $0$ , if $v_i=v_j.$ The Seidel energy of $G$ is the sum of the absolute values of the eigenvalues of its Seidel matrix. We present here some graph families which are Seidel equienergetic.
\end{abstract}

\section{Introduction}
For standard terminology and notations related to graph theory we follow Balakrishnan and Ranganathan \cite{def} while for any undefined term in algebra we follow Lang \cite{lan}. Let $G$ be a simple graph with vertex set $\{v_1,v_2,\cdots,v_n \}$. The \textit{adjacency matrix} denoted by $A(G)$ of $G$ is defined to be $A(G)=[a_{ij}]$, such that, $a_{ij}=1$ if $v_i$ is adjacent with $v_j$, and $0$ otherwise. 
\par The eigenvalues of $A$ are called the eigenvalues of $G$.  The energy $E(G)$ of graph $G$ is the sum of all absolute values of eigenvalues of $G$. The concept of \textit{energy of graph} was introduced by Gutman \cite{gut1} in 1978.  A brief account on energy of graph can be found in Cvetkovi$\Grave{c}$ \cite{cve} and Li \cite{li}
\par The other varients of energy like Laplacian energy \cite{lap}, Incidence energy \cite{gut3}, Skew energy \cite{adi}, Distance energy \cite{boz}, Seidel energy \cite{hea} are also available in the literature. In the present paper we have focused on Seidel energy of graphs. 
\par Let $G$ be a simple graph with vertex set $\{v_1,v_2,\cdots,v_n \}$. The \textit{Seidel matrix} of $G$ which is denoted by $S(G) = [s_{ij}]$ is a $n \times n$ matrix in which $s_{11} = s_{22} = \cdots = s_{nn}=0$. Also for $ i \neq j, s_{ij}= -1$, if $v_i$ is adjacent to $v_j$ and $s_{ij}=1$, otherwise.  
\par The eigenvalues of the Seidel matrix, labeled as $\sigma_1,\sigma_2,\cdots,\sigma_n$, are said to be the Seidel eigenvalues of $G$.  For convenient we represent the multiplicity of any eigenvalue as its power.  The collection of Seidel eigenvalues together with their multiplicities is known as Seidel spectrum of $G$ denoted by $Spec_s(G)$.
Haemers \cite{hea} has defined the Seidel energy of $G$ as $$SE(G)=\sum_{i=1}^{n} | \sigma_i |$$
As an example the, Seidel matrix of the complete graph $K_n$ is $I-J$. Thus $$Spec_s(K_n)=\{1^{n-1}\} \cup \{1-n^{1}\}$$ Therefore, $SE(K_n) = 2n - 2$. 
\par Two graphs $G_1$ and $G_2$ are said to be Seidel equienergetic if $SE(G_1) = SE(G_2)$. Of course, Seidel cospectral graphs are Seidel equienergetic. We are interested in graphs which are of  same order, non co-spectral and equienergetic in the context of Seidel energy. Let $\overline{G}$ be the complement of the graph $G$ then $S(G) = A(G)-A(\overline{G})$ implying $S(\overline{G})=-S(G)$. Consequently, $G$ and $\overline{G}$ are obviously Seidel equienergetic. 
\par The \textit{join of two graphs} $G_1$ and $G_2$, denoted by $G_1 + G_2$, is a graph obtained from $G_1 \cup G_2$ by joining each vertex of $G_1$ to all vertices of $G_2$. Ramane \textit{et al.} \cite{ram1} have proved that if $H_1$ and $H_2$ are Seidel non cospectral, Seidel equienergetic regular graphs on $n$
vertices and of same degree, then for any regular graph $G$, $SE(H_1 + G) = SE(H_2 + G)$.
\par Ramane \textit{et al.} \cite{ram} have also proved that if $G_1$ and $G_2$ be two Seidel non-cospectral $r$- regular graphs of the same order and of the same degree with $r \geq 3$, then for any $k \geq 2$, the iterated line graphs $L^k(G_1)$ and $L^k(G_2)$ are Seidel non-cospectral, Seidel equienergetic graphs.
\par The  \textit{cartesian product} of graphs $G$ and $H$ is a graph, denoted as $G \times H$, whose vertex set is $V(G) \times V(H)$. Two vertices $(u_1,v_1)$ and $(u_2,v_2)$ are adjacent if $u_1=u_2$ and $v_1v_2\in E(H)$ or $v_1=v_2$ and $u_1u_2 \in E(G)$. 
\par Let $A \in R^{m\times n}$, $B \in R^{p \times q}$. The \textit{Kronecker product} (or tensor product) of $A$ and $B$ is defined as the matrix 
\[A \otimes B = \begin{bmatrix}
a_{11}B & \cdots & a_{1n}B\\ 
\vdots & \ddots & \vdots \\
a_{m1}B & \cdots & a_{mn}B
\end{bmatrix} \]
\par It is well known and easy to show that for graphs $G_1$ and $G_2$ for which loops are allowed, we have
$$A(G_1 \times G_2)=A(G_1) \otimes A(G_2)$$
\par We define $G^r$ be the graph obtained from $G$ by adding loops on every vertex and $G^{ur}$ be the graph obtained from $G$ by removing all loops. For any simple graph $G$, if we take $G_1=K_m^r$ and $G_2=G$, using the facts that $A(G^r)=A(G)+I, A(G^{ur})=A(G)-I, A(K_m^r)=J$ and writing $A$ for $A(G)$, we have, 
$$A(K_m^r \times G)=J \otimes A$$
and
$$A(K_m^r \times G^r)^{ur}=J \otimes (A+I)-I \otimes I$$
\hspace{0.5cm} We introduce two convenient notations,  $D_m(G)=K_m^r \times G$ and $D_m^*(G)=(K_m^r \times G^r)^{ur}$. It is easy to verify that if $G$ is a graph with $n$ vertices then both  $D_m(G)$ and  $D_m^*(G)$ are graphs with $mn$ vertices. For better understanding  $D_2(C_5)$ and $D_2^*(C_5)$ are shown in following figure.
\begin{figure}[h]
\includegraphics[scale=0.4]{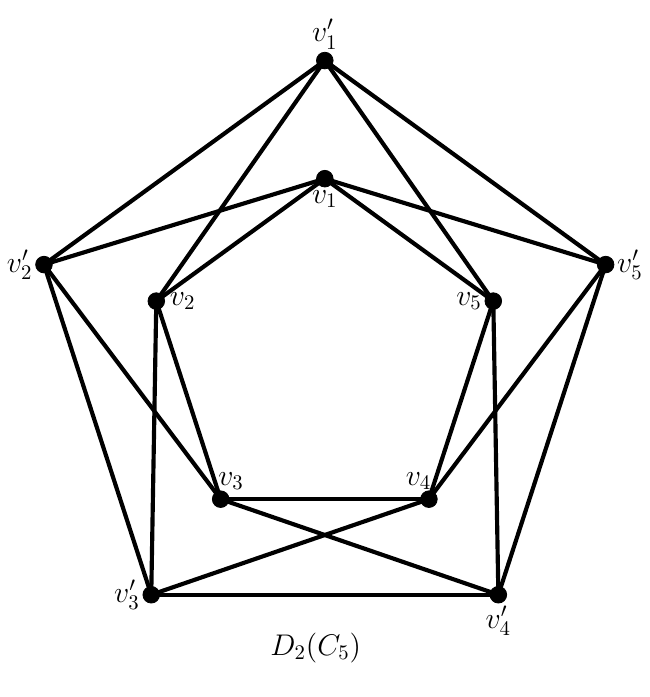}
\hfill
\includegraphics[scale=0.4]{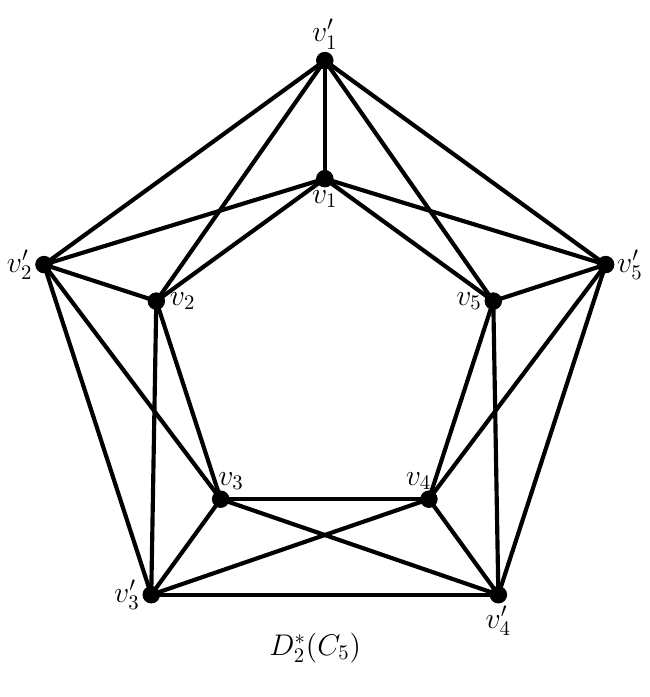}
\caption{}
\end{figure} 

\begin{proposition}\label{prop1} \cite{hor}
Let $A \in M^m$ and $B \in M^n$. Furthermore, let $\sigma$ be eigenvalue of matrix $A$ with corresponding eigenvector $x$ and $\mu$ be eigenvalue of matrix $B$ with corresponding eigenvector $y$. Then $\sigma \mu$ is an eigenvalue of $A \otimes B$ with corresponding eigenvector $x \otimes y$.
\end{proposition}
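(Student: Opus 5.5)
The approach is to verify directly that $(A\otimes B)(x\otimes y)=\sigma\mu\,(x\otimes y)$ and that $x\otimes y\neq 0$. The central tool is the mixed-product property of the Kronecker product,
\[
(A\otimes B)(C\otimes D)=(AC)\otimes(BD),
\]
valid whenever the products $AC$ and $BD$ are defined. We regard $x$ as an $m\times 1$ matrix and $y$ as an $n\times 1$ matrix, so that $x\otimes y$ is an $mn\times 1$ column vector.

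First, establish the mixed-product property in the case needed here, using the block definition of $A\otimes B$ given above. The $i$-th block row of $A\otimes B$ is $[a_{i1}B\ \cdots\ a_{im}B]$. The vector $x\otimes y$ has blocks $x_1y,\dots,x_my$. Hence the $i$-th block of $(A\otimes B)(x\otimes y)$ is
\[
\sum_{j=1}^m a_{ij}x_j\,By=(Ax)_i\,By,
\]
which is precisely the $i$-th block of $(Ax)\otimes(By)$. This block computation is the only step with any content, and it is routine.

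Next, substitute $Ax=\sigma x$ and $By=\mu y$. This gives $(Ax)\otimes(By)=(\sigma x)\otimes(\mu y)=\sigma\mu\,(x\otimes y)$, by bilinearity of the Kronecker product, which is immediate from the definition since each block $\sigma x_j\,\mu y$ equals $\sigma\mu\, x_j y$.

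Finally, check nondegeneracy. Because $x$ and $y$ are eigenvectors, they are nonzero, so there exist indices with $x_j\neq 0$ and $y_k\neq 0$. The entry of $x\otimes y$ in block $j$, position $k$, is $x_jy_k\neq 0$. Therefore $x\otimes y$ is a genuine eigenvector of $A\otimes B$ with eigenvalue $\sigma\mu$. No step presents a real obstacle; the only care needed is in the block-index bookkeeping of the first step.
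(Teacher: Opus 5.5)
Your proof is correct and is the standard argument. The paper gives no proof of its own but cites Horn and Johnson, where the result follows from exactly the mixed-product identity $(A\otimes B)(x\otimes y)=(Ax)\otimes(By)=\sigma\mu\,(x\otimes y)$ that you verify blockwise; your explicit check that $x\otimes y\neq 0$ (via an entry $x_jy_k\neq 0$) is a detail often left implicit and is worth keeping.
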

\section{Seidel Equienergetic graphs}
\begin{lemma} \label{lemma1}
If the Seidel spectrum of $G$ is $\{\sigma_1,\sigma_2,\cdots,\sigma_n\}$ then the Seidel spectrum of $D_m(G)$ is 
$$Spec_s(D_m(G))=\{m\sigma_i+(m-1)|\,i=1,2,\cdots,n\}\cup \{-1^{mn-n}\}$$
\end{lemma}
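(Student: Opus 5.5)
We compute the Seidel matrix of $D_m(G)$ explicitly as a Kronecker expression in $S(G)$ and then read off its spectrum with Proposition~\ref{prop1}. Write $A=A(G)$ and $S=S(G)$, and note that $S=J_n-I_n-2A$.

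First we check that $D_m(G)$ is simple and identify its adjacency. By the formula above, $A(D_m(G))=J_m\otimes A$. Vertices $(i,v)$ and $(j,w)$ are adjacent exactly when $vw\in E(G)$. Since $A$ has zero diagonal, no vertex gets a loop, so $D_m(G)$ is a simple graph on $mn$ vertices. In general the Seidel matrix of a simple graph $H$ on $N$ vertices is $J_N-I_N-2A(H)$. Applying this to $D_m(G)$ and using $J_{mn}=J_m\otimes J_n$ and $I_{mn}=I_m\otimes I_n$, we get
\begin{equation*}
S(D_m(G)) = J_m\otimes J_n - I_m\otimes I_n - 2\,J_m\otimes A = J_m\otimes (S+I_n) - I_m\otimes I_n .
\end{equation*}

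Next we find a full eigenbasis. $J_m$ is real symmetric, with eigenvalue $m$ on the all-ones vector and eigenvalue $0$ of multiplicity $m-1$. Choose an orthonormal eigenbasis $x_1,\dots,x_m$ of $J_m$ accordingly. $S+I_n$ is real symmetric with eigenvalues $\sigma_i+1$; choose an orthonormal eigenbasis $y_1,\dots,y_n$ of $S$, which is also an eigenbasis of $S+I_n$.

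By Proposition~\ref{prop1}, each $x_k\otimes y_i$ is an eigenvector of $J_m\otimes(S+I_n)$. Its eigenvalue is $m(\sigma_i+1)$ when $k=1$ and $0$ otherwise. Each $x_k\otimes y_i$ is also trivially an eigenvector of $I_m\otimes I_n$ with eigenvalue $1$. Hence it is an eigenvector of $S(D_m(G))$ with eigenvalue $m\sigma_i+m-1$ when $k=1$. For $k\ge 2$ the eigenvalue is $-1$, and there are $(m-1)n=mn-n$ such vectors.

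The only point needing care is that this list is the whole spectrum, counted with multiplicity. Proposition~\ref{prop1} alone only says these numbers occur as eigenvalues. However, the $mn$ vectors $x_k\otimes y_i$ are orthonormal, because $(x\otimes y)^T(x'\otimes y')=(x^Tx')(y^Ty')$. They therefore form a basis of $\mathbb{R}^{mn}$ consisting of eigenvectors of the symmetric matrix $S(D_m(G))$, so the multiplicities are exactly as listed. This gives the stated $Spec_s(D_m(G))$.
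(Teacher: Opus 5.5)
Your proof is correct and takes the same route as the paper: you derive $S(D_m(G)) = J_m\otimes(S(G)+I_n) - I_m\otimes I_n$ from $S = J - I - 2A$ and read off the spectrum via Proposition~\ref{prop1}. Your orthonormal tensor-eigenbasis argument also shows that the listed values are the entire spectrum with the stated multiplicities, which the paper's proof leaves implicit.
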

\begin{proof}
As $A(K_m^r)=J=J_m$ has spectrum $\{m,0^{m-1}\}$, we have by Proposition \ref{prop1} that an eigenvalue $\sigma_i$ of $S(G)$ yields in $A(K_m^r)\times (S(G)+I)-I$ an eigenvalue $m(\sigma_i+1)-1=m\sigma_i+(m-1)$ and $mn-n$ eigenvalue $0(\sigma_i+1)-1=-1$. It is enough to show that $S(K_m^r \times G)=A(K_m^r) \times (S(G)+I)-I$. Writing $A$ for $A(G)$ and $S(M)=-2M-I+J$ for any matrix $M$, We have to show that $S(J \otimes A)=J \otimes (S(A)+I)-I$. Observe that first,
$$S(B \otimes A)=-2(B \otimes A)-I \otimes I+ J \otimes J$$
and so taking $B=J$ and moving $I \times I$ to the other side, $S(K_m^r \times G)+I$ is 
$$S(J \otimes A)+I \otimes I=-2(J \otimes A)+J \otimes J=J \otimes(-2A+J)=J \otimes (S(A)+I)$$
\end{proof}
\begin{lemma} \label{lemma2}
If the Seidel spectrum of $G$ is $\{\sigma_1,\sigma_2,\cdots,\sigma_n\}$ then the Seidel spectrum of $D_m^*(G)$ is 
$$Spec_s(D_m^*(G))=\{m\sigma_i-(m-1)|i=1,2,\cdots,n\} \cup \{1^{mn-m}\}$$
\end{lemma}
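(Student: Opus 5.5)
The plan is to mirror the proof of Lemma \ref{lemma1}: first write the Seidel matrix of $D_m^*(G)$ as a Kronecker product of $J=J_m$ with a matrix built from $S(G)$, plus a multiple of the identity, and then read off the spectrum from Proposition \ref{prop1}. As before, write $A$ for $A(G)$ and use $S(M)=-2M-I+J$ for the Seidel matrix of a graph with adjacency matrix $M$. By the introduction,
$$A(D_m^*(G))=J\otimes(A+I)-I\otimes I .$$

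\textbf{Step 1: express $S(D_m^*(G))$ through $S(G)$.} Substituting into $S(M)=-2M-I+J$ and writing $I\otimes I$ and $J\otimes J$ for the $mn\times mn$ identity and all-ones matrices gives
$$S(D_m^*(G))=-2J\otimes(A+I)+2\,I\otimes I-I\otimes I+J\otimes J .$$
Collecting the $J\otimes\cdot$ terms, this becomes
$$S(D_m^*(G))=J\otimes(J-2A-2I)+I\otimes I .$$
Since $S(G)=J-I-2A$, we have $J-2A-2I=S(G)-I$. Hence
$$S(D_m^*(G))=J\otimes\bigl(S(G)-I\bigr)+I\otimes I .$$

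\textbf{Step 2: read off the spectrum.} $J$ has spectrum $\{m,0^{m-1}\}$, and $S(G)-I$ has eigenvalues $\sigma_i-1$. By Proposition \ref{prop1}, $J\otimes(S(G)-I)$ has eigenvalue $m(\sigma_i-1)$ on $x\otimes y_i$, where $x$ is the all-ones vector and $y_i$ is an eigenvector of $S(G)$ for $\sigma_i$. It has eigenvalue $0$ on $x'\otimes y_i$ for each of the $m-1$ vectors $x'$ in the $0$-eigenspace of $J$. Adding $I\otimes I$ shifts every eigenvalue by $1$. This yields the eigenvalues $m\sigma_i-(m-1)$ for $i=1,\dots,n$, and the eigenvalue $1$.

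\textbf{Step 3: completeness of the spectrum.} This is the only point needing care, since Proposition \ref{prop1} by itself only produces eigenvalues. Both $J$ and $S(G)$ are real symmetric, so we may choose orthonormal eigenbases $\{x_j\}$ of $\mathbb{R}^m$ and $\{y_i\}$ of $\mathbb{R}^n$. The $mn$ vectors $x_j\otimes y_i$ are then orthonormal, so they form a full eigenbasis, and the list above is the entire Seidel spectrum with multiplicities.

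\textbf{Multiplicity of $1$ (discrepancy with the statement).} Counting the vectors in Step 3, the eigenvalue $1$ comes from the $(m-1)n$ pairs with $x_j$ in the $0$-eigenspace of $J$, so its multiplicity is $mn-n$, exactly as in Lemma \ref{lemma1}. The exponent $mn-m$ in the statement therefore appears to be a typo for $mn-n$. A quick count confirms this: there are $n$ eigenvalues of the form $m\sigma_i-(m-1)$, and the total must be $mn$.
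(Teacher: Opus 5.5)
Your proof is correct and follows the same route as the paper. You write $S(D_m^*(G))=J\otimes(S(G)-I)+I\otimes I$ and read off the spectrum via Proposition \ref{prop1}, and your Step 3 supplies the completeness argument that the paper omits. Your Step 1 also gets the identity shift right, whereas the paper's displayed computation adds $I$ where it should subtract it. Your multiplicity $mn-n$ for the eigenvalue $1$ is the correct one, and it is what the paper implicitly uses later when it gives the eigenvalue $1$ of $D_m^*(D_m(G))$ multiplicity $m^2n-mn$.
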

\begin{proof}
Much like above,
\begin{align*}
\left[S(J \otimes (A+I)-I\otimes I\right]+I= & \left[-2(J \otimes(A+I)-I\otimes I+J \otimes J-I \otimes I \right]+I \otimes I \\
= & J \otimes (-2(A+I))+ J \otimes J \\
= & J \otimes (-2(A+I)+J) = J \otimes (S(A)-I)
 \end{align*}
\end{proof}
\begin{theorem} \label{thm1}
Let $G$ be a graph with Seidel eigenvalues $\sigma_1, \sigma_2,\cdots, \sigma_n$ with $\left| \sigma_i \right| \geq \frac{m-1}{m}$ , for all $1 \leq i \leq n $ then, $D_m(G)$ and $D_m^*(G)$ are Seidel non co-spectral equienergetic graphs if and only if $G$ have equal numbers of positive and negative Seidel eigenvalues.
\end{theorem}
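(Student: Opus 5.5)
Using Lemma \ref{lemma1} and Lemma \ref{lemma2}, I will write both Seidel energies explicitly and compare them. The hypothesis $|\sigma_i|\geq \frac{m-1}{m}$ fixes the sign of every shifted eigenvalue, so the absolute values can be removed.

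\textbf{Energy of $D_m(G)$.} Let $p$ be the number of positive and $q$ the number of negative Seidel eigenvalues of $G$. Since $|\sigma_i|\geq\frac{m-1}{m}>0$, no $\sigma_i$ is zero, so $p+q=n$.

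If $\sigma_i>0$, then $m\sigma_i+(m-1)>0$. If $\sigma_i<0$, then $\sigma_i\leq -\frac{m-1}{m}$, which gives $m\sigma_i+(m-1)\leq 0$. Hence
$$SE(D_m(G))=\sum_{\sigma_i>0}\bigl(m\sigma_i+(m-1)\bigr)+\sum_{\sigma_i<0}\bigl(-m\sigma_i-(m-1)\bigr)+(mn-n)=m\,SE(G)+(m-1)(p-q)+(mn-n).$$

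\textbf{Energy of $D_m^*(G)$.} The same sign analysis applies to $m\sigma_i-(m-1)$. If $\sigma_i>0$, then $\sigma_i\geq\frac{m-1}{m}$, so $m\sigma_i-(m-1)\geq 0$. If $\sigma_i<0$, then $m\sigma_i-(m-1)<0$. Hence
$$SE(D_m^*(G))=m\,SE(G)-(m-1)(p-q)+(mn-m).$$

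One point needs checking before going further. Lemma \ref{lemma2} lists the eigenvalue $1$ with multiplicity $mn-m$, but the total count of eigenvalues must be $mn$, which forces that multiplicity to be $mn-n$. The proof of Lemma \ref{lemma2} gives exactly $mn-n$ copies, coming from the zero eigenvalues of $J$ paired with the $n$ eigenvalues of $S(A)-I$. So I will use $mn-n$, and then the constant terms in the two energies agree.

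\textbf{Comparison.} Subtracting the two expressions gives
$$SE(D_m(G))-SE(D_m^*(G))=2(m-1)(p-q).$$
For $m\geq 2$ this vanishes if and only if $p=q$. This settles the equienergetic part of both directions. (For $m=1$ the two graphs coincide with $G$, so I will assume $m\geq2$ throughout.)

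\textbf{Non-cospectrality.} It remains to show that the two Seidel spectra differ. The trace is useless here, since every Seidel matrix has trace $0$, so I would compare multisets directly.
\begin{itemize}
\item The spectrum of $D_m(G)$ contains $-1$ with multiplicity at least $mn-n$.
\item A value $m\sigma_j-(m-1)$ equals $-1$ only when $\sigma_j=\frac{m-2}{m}$. This is smaller than the bound $\frac{m-1}{m}$ and is nonnegative, so it is excluded by the hypothesis.
\item The value $1$ from $D_m^*(G)$ is not $-1$.
\end{itemize}
Therefore $-1$ is not an eigenvalue of $D_m^*(G)$ at all, while it has positive multiplicity $mn-n$ in the spectrum of $D_m(G)$. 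The spectra differ, so the graphs are non-cospectral whenever $m\geq 2$.

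Combining the comparison with this argument gives the stated equivalence. The main obstacle is the careful sign bookkeeping at the boundary cases $\sigma_i=\pm\frac{m-1}{m}$, together with confirming the corrected multiplicity $mn-n$ in Lemma \ref{lemma2}.
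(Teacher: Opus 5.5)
Your proof is correct and takes the same approach as the paper. Both compare $\sum_i |m\sigma_i+(m-1)|$ with $\sum_i |m\sigma_i-(m-1)|$ term by term, where each term differs by $\pm 2(m-1)$ according to the sign of $\sigma_i$. You also go beyond the paper in three places: you handle the boundary case $|\sigma_i|=\frac{m-1}{m}$ (the paper's proof quietly assumes strict inequality), you correct the multiplicity of the eigenvalue $1$ in Lemma~\ref{lemma2} to $mn-n$, and you give a valid non-cospectrality argument via the eigenvalue $-1$, which the paper never proves.
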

\begin{proof}
It is not hard to see that the sums of the spectrums in these two lemma are the same. To say the same about the sums of the absolute values, (so that the Seidel energies are the same) we observe that we need that
$$\sum_{i}\left|m\sigma_i+(m-1)\right|=\sum_{i} \left|m\sigma_i-(m-1)\right|$$
Assuming that $|\sigma_i|>\dfrac{m-1}{m}$, we get that $\left|m\sigma_i+(m-1)\right|-\left|m\sigma_i-(m-1)\right|$ is $2(m-1)$ if $\sigma_i$ is positive and $-2(m-1)$ if $\sigma_i$ is negative for each $i=1,2,\cdots,n$. It follows that with these restrictions on the eigenvalues that the Seidel energies of $D_m(G)$ and $D_m^*(G)$ are the same if and only if $G$ has the same number of positive and negative eigenvalues. This gives Theorem \ref{thm1}.
\end{proof}
\par Observe that if $G$ is a graph with $n$ vertices then both  $D_m(D_m^*(G))$ and  $D_m^*(D_m(G))$ are graphs with $m^2n$ vertices. We next prove following theorem for Seidel equienergetic graphs.
\begin{theorem}
Let $G$ be a graph with Seidel eigenvalues $\sigma_1, \sigma_2,\cdots, \sigma_n$ with $\left| \sigma_i \right| \geq \left(\frac{m-1}{m}\right)^2$ , for all $1 \leq i \leq n $ then, $D_m^*(D_m(G))$ and $D_m(D_m^*(G))$ are Seidel non co-spectral equienergetic graphs if and only if $G$ has equal numbers of positive and negative Seidel eigenvalues.
\end{theorem}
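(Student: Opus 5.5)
The plan is to compute both Seidel spectra explicitly by applying Lemma \ref{lemma1} and Lemma \ref{lemma2} twice, and then to compare energies term by term, exactly as in the proof of Theorem \ref{thm1}. Both lemmas hold for an arbitrary graph, so they can be applied to $D_m(G)$ and $D_m^*(G)$, which have $mn$ vertices.

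First I will compute the spectrum of $D_m^*(D_m(G))$. By Lemma \ref{lemma1}, $Spec_s(D_m(G))$ consists of $m\sigma_i+(m-1)$ for $i=1,\dots,n$ together with $-1$ of multiplicity $mn-n$. Applying Lemma \ref{lemma2} to this graph sends each eigenvalue $\tau$ to $m\tau-(m-1)$ and adds $1$ with multiplicity $m^2n-mn$. This gives the eigenvalues
$$m^2\sigma_i+(m-1)^2 \ (i=1,\dots,n), \qquad -(2m-1) \text{ with multiplicity } mn-n, \qquad 1 \text{ with multiplicity } m^2n-mn.$$
Symmetrically, for $D_m(D_m^*(G))$ I apply Lemma \ref{lemma2} and then Lemma \ref{lemma1}. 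This gives the eigenvalues
$$m^2\sigma_i-(m-1)^2 \ (i=1,\dots,n), \qquad 2m-1 \text{ with multiplicity } mn-n, \qquad -1 \text{ with multiplicity } m^2n-mn.$$

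Next I compare energies. The parts not depending on $\sigma_i$ contribute $(2m-1)(mn-n)+(m^2n-mn)$ to both energies, so they cancel. Hence
$$SE(D_m^*(D_m(G)))-SE(D_m(D_m^*(G)))=\sum_{i}\Big(\left|m^2\sigma_i+(m-1)^2\right|-\left|m^2\sigma_i-(m-1)^2\right|\Big).$$
Suppose $|\sigma_i|\ge \left(\frac{m-1}{m}\right)^2$ (and $m\ge 2$, so $\sigma_i\neq 0$). Then $m^2\sigma_i\pm(m-1)^2$ both have the sign of $\sigma_i$, or one of them is zero in the boundary case. So each summand equals $2(m-1)^2$ if $\sigma_i>0$ and $-2(m-1)^2$ if $\sigma_i<0$. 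Consequently the difference is $2(m-1)^2(p-q)$, where $p$ and $q$ are the numbers of positive and negative Seidel eigenvalues of $G$. The energies therefore agree if and only if $p=q$, which is the equienergetic part of the claim.

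The step I expect to need the most care is the non-cospectrality. My plan is to compare multiplicities of a suitable eigenvalue, say $2m-1$, in the two explicit lists above. It appears $mn-n$ times in $D_m(D_m^*(G))$ from the fixed part, but only through the terms $m^2\sigma_i+(m-1)^2$ in $D_m^*(D_m(G))$. Since $\sigma_i=\frac{m}{m^2}$ would be needed for a coincidence, a counting argument on these $n$ values should show that the multiplicities differ once $mn-n>n$, that is, for $m\ge 3$. For small $m$ I would fall back on comparing the spectra directly. An alternative is to compare $\operatorname{tr}(S^3)$ of the two Seidel matrices: the fixed parts contribute $\pm(2m-1)^3(mn-n)$ and $\mp(m^2n-mn)$ with opposite signs, while the $\sigma$-parts differ in a way expressible through $\sum\sigma_i$ and $\sum\sigma_i^2$.
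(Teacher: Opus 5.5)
Your spectral computations and the term-by-term energy comparison are the same as the paper's proof. You are slightly more careful than the paper in two places. First, you handle the boundary case $|\sigma_i|=\left(\frac{m-1}{m}\right)^2$, which the statement allows but the paper's argument skips by assuming strict inequality. Second, you state $m\ge 2$ explicitly. That assumption is necessary: for $m=1$ both constructions return $G$ itself, so the non-cospectrality claim fails, for example for $G=K_2$.

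The paper never proves non-cospectrality, so your last paragraph goes beyond it, but as written it is incomplete in two ways.
\begin{itemize}
\item The coincidence value is wrong. The equation $m^2\sigma_i+(m-1)^2=2m-1$ forces $\sigma_i=(4m-2-m^2)/m^2$, not $1/m$; the two agree only at $m=2$. This does not damage your count for $m\ge 3$, because that count only uses the bound ``at most $n$ coincidences''.
\item The case $m=2$ is left to an unspecified ``direct comparison'', so it is not actually proved.
\end{itemize}

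The cleanest fix is to count the eigenvalue $-1$ instead of $2m-1$. In $D_m^*(D_m(G))$, the value $-1$ can only come from the $n$ values $m^2\sigma_i+(m-1)^2$, since $1-2m\neq -1$ and $1\neq -1$ for $m\ge 2$. So its multiplicity there is at most $n$. In $D_m(D_m^*(G))$ its multiplicity is at least $m(m-1)n\ge 2n$. This proves non-cospectrality for every $m\ge 2$ and every $G$, with no hypothesis on the $\sigma_i$.

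Alternatively, you can keep your choice of $2m-1$ and note that $(4m-2-m^2)/m^2$ is a non-integer rational for $m\ge 2$. It therefore cannot be a root of the monic integer characteristic polynomial of $S(G)$, so $2m-1$ does not occur in the spectrum of $D_m^*(D_m(G))$ at all.
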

\begin{proof}
By Lemma \ref{lemma1}, $D_m(G)$ has spectrum $\{m\sigma_i+(m-1)|\,i=1,2,\cdots,n\} \cup \{-1^{mn-n}\}$ and by Lemma \ref{lemma2} spectrum of $D_m^*(D_m(G))$ is $\{m^2\sigma_i+(m-1)^2|\, i=1,2,\cdots,n\} \cup \{(1-2m)^{mn-n}\} \cup \{1^{m^2n-mn}\}$ and again by Lemma \ref{lemma2} and Lemma \ref{lemma1}, spectrum of $D_m(D_m^*(G))$ is $\{m^2\sigma_i-(m-1)^2|\, i=1,2,\cdots,n\} \cup \{(2m-1)^{mn-n}\} \cup \{-1^{m^2n-mn}\}$. To prove $D_m^*(D_m(G))$ and $D_m(D_m^*(G))$ are equienergetic, it is enough to prove 
$$\sum_{i}\left|m^2\sigma_i+(m-1)^2\right|=\sum_{i} \left|m^2\sigma_i-(m-1)^2\right|.$$  If $\left|\sigma_i\right|>\dfrac{(m-1)^2}{m^2}$, we have $\left|m^2\sigma_i+(m-1)^2\right|-\left|m^2\sigma_i-(m-1)^2\right|$ is $2(m-1)^2$ if $\sigma_i$ is positive and $-2(m-1)^2$ if $\sigma_i$ is negative. It follows that $D_m^*(D_m(G))$ and $D_m(D_m^*(G))$ are Seidel equienergetic if and only if $G$ has equal numbers of positive and negative Seidel eigenvalues.
\end{proof}

\section{Concluding Remarks}
The concept of Seidel equienergetic graphs is analogous to the concepts of equienergetic graphs. We present here methods to construct Seidel equienergetic graphs by means of $G^r$ and $G^{ur}$ from a given graph $G$.     

\section*{Acknowledgement}
The authors are highly thankful to the anonymous  referee for kind suggestions and comments on the first draft of this paper.


\end{document}